\newtheorem{theorem}{Theorem}
\newtheorem{corollary}[theorem]{Corollary}
\newtheorem{lemma}[theorem]{Lemma}
\newtheorem{proposition}[theorem]{Proposition}
\theoremstyle{remark}
\newtheorem*{remark}{Remark}
\newtheorem*{remarks}{Remarks}
\numberwithin{equation}{section}
\numberwithin{theorem}{section}
\newcommand{\nfrac}[3]{\frac{#2}{\raisebox{-#1}{$#3$}}}
\newcommand{\fracn}[3]{\frac{\raisebox{-#1}{$#2$}}{#3}}
\def\half{\fracn{1pt}{1}{2}}
\def\N{{\mathbb N}}
\def\R{{\mathbb R}}
\def\Z{{\mathbb Z}}
\def\B{{\mathcal B}}
\def\F{{\mathcal F}}
\def\G{{\mathcal G}}
\def\g{\widetilde g}
\def\L{{\mathcal L}}
\def\T{T_+}
\def\X{{X_+}}
\def\1{{\bf 1}}
\def\var{{\rm var}}
\begin{document}

\title{Some comments on `Unique Bernoulli $g$-measures'}

\author{Paul Hulse}

\thanks{Email: phulse@ed.ac.uk}

\begin{abstract}
Proofs of results due to Johansson, \"Oberg and Pollicott \cite{JOP}
are given which correct a bound used in the original.
This leads to modifications to the most general results;
however, the main corollaries are unaffected.
\end{abstract}

\maketitle

\section{Introduction}
\label{intro}

In what follows, it is assumed that a finite set $S$ has the discrete topology,
and if $G$ is a countable set, the sequence space $Y=S^G$
has the product topology,
with respect to which it is compact.
The set of Borel probability measures on $Y$ is denoted by $M(Y)$,
and $B(Y)$ is the set of bounded real-valued Borel functions on $Y$.
If $\zeta\in S^\Lambda$ ($\Lambda\subseteq G$) and $K\subseteq\Lambda$,
then $\zeta_K$ denotes the natural projection of $\zeta$ onto $S^K$,
$[\zeta]$ is the cylinder set
$\{y\in Y\,:\,y_\Lambda=\zeta\}$,
$[\zeta]_K=[\zeta_K]$,
and the sub-$\sigma$-algebra generated
by $\{[\zeta]\,:\,\zeta\in S^\Lambda\}$ is denoted by $\B_\Lambda$
(or $\B^Y_\Lambda$ if we wish to emphasise the space);
if $\Lambda\cap\Lambda^\prime=\emptyset$ and $\eta\in S^{\Lambda^\prime}$,
then $\zeta\eta\in S^{\Lambda\cup\Lambda^\prime}$ is defined by
\begin{equation*}
(\zeta\eta)_i=
\begin{cases}
\zeta_i,& i\in\Lambda,\\
\eta_i,& i\in\Lambda^\prime.
\end{cases}
\end{equation*}
The variation of $f\in B(Y)$ is measured by
$$
\var_\Lambda(f)=\sup\{f(x)-f(y)\,\colon x_\Lambda=y_\Lambda\}
\qquad(\Lambda\subseteq G),
$$
and for positive $f\in B(Y)$,
$$
\rho_\Lambda(f)=\sup\{f(x)/f(y)\,\colon x_\Lambda=y_\Lambda\}.
$$
(Note that $\log \rho_\Lambda(f)=\var_\Lambda(\log f)$.)

Throughout, $X$ denotes $S^\Z$ (for some finite set $S$ with $|S|\geq2$),
and $\X$ denotes $S^{\Z^+}$ (where $\Z^+$ is the set of non-negative integers;
$\Z^-$ denotes the non-positive integers);
$T$ and $\T$ denote the left shifts on $X$ and $\X$, respectively.
Let
$$
\G=\biggl\{g\in B(\X)\,:\,g\geq0,\ \sum_{y\in \T^{-1}x}g(y)=1
\ \forall\, x\in\X\biggr\}.
$$
If $g\in\G$, a measure $\mu\in M(X)$ is said to be a $g$-chain if
\begin{equation}\label{eq:gmeas}
\mu\bigl([x]_{\{n\}}\bigm|\B_{(n,\infty)}\bigr)(x)
=\g(T^nx)\qquad\hbox{a.e.}(\mu)
\end{equation}
for all $n\in\Z$
(here and later, $\g$ denotes the natural extension of $g$ to $X$,
and we apply the usual interval notation to subsets of $\Z$).
Equivalently, \eqref{eq:gmeas} defines $g$-chains on $\X$
(with the obvious modifications);
the natural projection from $X$ onto $\X$ determines
a one-to-one correspondence between the two.
A $T$- or $\T$-invariant $g$-chain is usually referred to as a $g$-measure;
if $g$ is positive and continuous, there is always at least one $g$-measure.

In \cite{JOP}, Johansson, \"Oberg and Pollicott obtained sufficient conditions
on positive, continuous $g\in\G$ for there to be a unique $g$-chain,
which is $T$-invariant and Bernoulli, that is,
the dynamical system $(T,\mu)$ is isomorphic to a Bernoulli shift.
These conditions subsumed and extended many of the existing conditions for
uniqueness and Bernoullicity, specifically those in terms of $\var_{[0,n]}(g)$.
(For more on the background to these problems, see \cite{JOP}.)
In their most general form (Theorems 1.1, 2.2 and 2.5 of \cite{JOP}),
the hypotheses are in terms of both the variation of $g$ and a sequence
of natural numbers which together determine a suitable
{\em block-variation pair} (see \cite{JOP}),
but as corollaries, uniqueness and the Bernoulli property
are obtained in the following three cases:
\begin{enumerate}
\item
\hypertarget{hyp1}{
$$
\sum_{n=1}^\infty\bigl(\var_{[0,n]}(\log g)\bigr)^2<\infty;
$$}
\item
\hypertarget{hyp2}{for some $\varepsilon>0$,
$$
\sum_{n=0}^\infty\prod_{i=0}^n\rho_{[0,i]}(g)^{-(\frac{1}{2}+\varepsilon)}
=\infty;
$$}
\item
\hypertarget{hyp3}{
$$
\var_{[0,n]}(\log g)=o\left(n^{-\frac{1}{2}}\right)
\quad\text{as }n\to\infty.
$$}
\end{enumerate}
The method of proof can be summarised roughly as follows.
A block coupling (determined partly by the block-variation pair)
is used to couple extensions of a $g$-chain with different initial distributions,
and an asymptotic bound for the probability of the extensions
disagreeing at a coordinate is obtained using
Hellinger integral estimates and the Renewal Theorem.

However, it seems the bound obtained from the Renewal Theorem
needs to be modified.
The main effect of this change on the generality of the results
seems to be to restrict the rate of growth of the sequence which can
be used in the block-variation pair.
The hypotheses (1)--(3), however, are unaffected;
indeed (2) is covered by an earlier result in \cite{H}
(see Section \ref{sect:(2)}).

The modified version of Theorems 1.1, 2.2 and 2.5 of \cite{JOP}
is Theorem \ref{thm:g}, from which Theorem \ref{thm:c} follows,
and from which in turn, results with hypotheses (1) and (3) can be deduced.
The proofs are based entirely on the techniques of \cite{JOP};
the differences between Theorem \ref{thm:g}
and the results in \cite{JOP} arise from differences between Lemma \ref{lem:nu}
and \cite[Lemma 2.3]{JOP},
which are discussed in remarks at the end of Section \ref{sect:(3)}.

\section{More on \texorpdfstring{$g$}{g}-measures}\label{sect:(2)}

Let $g\in\G$ be continuous.
The operator
$\L_g\colon C(\X)\rightarrow C(\X)$ is defined by
$$
\L_g f(x)=\sum_{y\in\T^{-1}x}g(y)f(y)
\qquad(f\in C(\X)).
$$
A measure $\mu\in M(\X)$ is a $g$-measure if and only if $\mu$ is $\L_g$-invariant,
that is, $\mu(\L_gf)=\mu(f)$ $(f\in C(\X))$.
Any weak*-limit of measures of the form
$$
\mu_k(f)=n_k^{-1}\sum_{i=1}^{n_k}\L_g^if(x^{(k)})
$$
is a $g$-measure, while any weak*-limit of the form
$$
\nu_k(f)=\L_g^{n_k}f(x^{(k)})
\qquad(f\in C(\X))
$$
is a $g$-chain
(where $x^{(k)}\in \X$ and $n_k\nearrow\infty$).
Thus, if $g\in\G$ is continuous, there is always at least one $g$-measure,
and uniqueness is equivalent to the convergence (pointwise or uniform) of
$n^{-1}\sum_{i=0}^{n-1}\L_g^if$
to a constant for every $f\in C(\X)$,
whereas there is a unique $g$-chain if and only if
$\L_g^nf$ converges to a constant.

The following result is proved in \cite{H}.

\begin{theorem}\label{thm:h}
Let $g\in\G$, and let
$$
d_n=\sup\left\{\half\sum_{s\in S^{\{0\}}}
|g(sx_\N)-g(sy_\N)|\,:\,x_i=y_i,\,0\leq i\leq n-1\right\}
\qquad(n\geq1).
$$
If $\sum_{n=1}^\infty\prod_{i=1}^n(1-d_i)=\infty$,
there is a unique $g$-measure $\mu\in M(\X)$.
The natural extension of $\mu$ is Bernoulli,
and $\L_g^nf\to\mu(f)$ uniformly as $n\to\infty$, for all $f\in C(\X)$.
\end{theorem}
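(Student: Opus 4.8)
The plan is to prove the convergence $\L_g^nf\to\mu(f)$ (and with it uniqueness of both the $g$-measure and the $g$-chain) by a coupling argument governed by the Renewal Theorem, and then to read off the Bernoulli property from the same coupling. The starting point is the identity $\L_g^nf(x)=\sum_w P^n_x(w)\,f(wx)$, where $w=s_n\cdots s_1$ ranges over the length-$n$ words prepended to the future $x$ and $P^n_x(s_n\cdots s_1)=\prod_{j=1}^n g(s_j\cdots s_1x)$; since $g\in\G$, each $P^n_x$ is a probability measure and each one-step law $s\mapsto g(su)$ is a probability vector on $S$. The role of $d_n$ is then transparent: it is exactly the supremum, over futures $u,v$ that agree in coordinates $0,\dots,n-1$, of the total-variation distance between the one-step laws $s\mapsto g(su)$ and $s\mapsto g(sv)$, which is precisely the quantity controlling a maximal coupling of these laws.

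Fix $f\in C(\X)$ and two futures $x,y$. I would construct words $W=s_n\cdots s_1$ and $W'=t_n\cdots t_1$, with laws $P^n_x$ and $P^n_y$, on a common space, choosing the pairs $(s_j,t_j)$ one at a time: given $s_1,\dots,s_{j-1}$ and $t_1,\dots,t_{j-1}$, I use a maximal coupling of $g(\cdot\,u)$ and $g(\cdot\,v)$, where $u=s_{j-1}\cdots s_1x$ and $v=t_{j-1}\cdots t_1y$. Let $R_j$ be the length of the leading block on which $u$ and $v$ agree. The essential feature of the prepending dynamics is that these leading coordinates are the \emph{most recently chosen} symbols, so the total-variation distance of the current one-step laws is at most $d_{R_j}$; hence the coupling sets $s_j=t_j$ with probability at least $1-d_{R_j}$, in which case $R_{j+1}\geq R_j+1$, and otherwise resets $R_{j+1}=0$. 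Consequently $R_j$ is stochastically minorised by a success-run chain whose resets form a renewal process, uniformly in $x,y$ (only the numbers $d_i$ enter), and whose inter-arrival time $\tau$ has tail comparable to $\prod_{i=1}^n(1-d_i)$.

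Now the hypothesis $\sum_n\prod_{i=1}^n(1-d_i)=\infty$ says precisely that $\mathbb E[\tau]=\sum_{n\geq0}\mathbb P(\tau>n)=\infty$, i.e.\ the renewal process has infinite mean. The Renewal Theorem in its infinite-mean form then gives that the renewal mass at time $n$ tends to $0$, whence for each fixed $L$ the age of the process exceeds $L$ with probability tending to one: $\mathbb P(R_{n+1}<L)\to0$ as $n\to\infty$, uniformly in $x,y$. Since the terminal value $R_{n+1}$ is exactly the leading agreement of the endpoints $Wx$ and $W'y$, the coupling gives
\[
|\L_g^nf(x)-\L_g^nf(y)|\leq\var_{[0,L-1]}(f)+2\|f\|_\infty\,\mathbb P(R_{n+1}<L).
\]
Letting $n\to\infty$ and then $L\to\infty$, and using that $f$ is continuous so $\var_{[0,L-1]}(f)\to0$, shows that the oscillation of $\L_g^nf$ tends to $0$ uniformly. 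As $\L_g$ is positive with $\L_g\1=\1$, the sequences $\min_x\L_g^nf$ and $\max_x\L_g^nf$ are non-decreasing and non-increasing respectively with a common limit; this limit, as a function of $f$, is a positive normalised linear functional $\mu\in M(\X)$, and $\mu(\L_gf)=\lim_n\L_g^{n+1}f=\mu(f)$ shows $\mu$ is a $g$-measure. Invariance together with the convergence $\L_g^nf\to\mu(f)$ forces every $g$-measure and every $g$-chain to coincide with $\mu$.

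Finally, the Bernoulli property should follow from the same estimate: applied on the two-sided natural extension, the coupling shows that the conditional distribution of one side of the process given the other can, for two different conditionings, be matched off an event of probability at most $\mathbb P(R<L)$, which verifies the very weak Bernoulli condition; Ornstein's isomorphism theorem then yields that the natural extension is Bernoulli. I expect the main obstacle to be the renewal estimate of the third paragraph: one must legitimately pass from the per-step upper bounds $d_{R_j}$ to a bona fide renewal process, verify the stochastic minorisation (the agreement length only ever increases by steps of one or resets to zero), and apply the infinite-mean Renewal Theorem to conclude that the renewal mass vanishes --- all while keeping every estimate uniform in the initial futures $x,y$. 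Granted this, the passage to $\mu$ is routine and the Bernoulli step is a standard appeal to Ornstein theory.
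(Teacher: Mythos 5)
Your strategy---sequential maximal coupling of the one-step laws, comparison with a success-run/renewal chain, the Renewal Theorem, then uniqueness plus very weak Bernoulli---is exactly the machinery of this paper: it is the coupling \eqref{eq:gcoup1}--\eqref{eq:gcoup2} with $b_n=1$, analysed in Proposition \ref{prop:gm} and fed into Theorem \ref{thm:g}. (The paper itself does not reprove Theorem \ref{thm:h}; it quotes it from \cite{H} and only remarks that it also follows from Theorem \ref{thm:g} with $b_n=1$.) The genuine gap is the step you yourself flag as the main obstacle and then treat as routine: the assertion that $R_j$ is stochastically minorised by the success-run chain with failure probabilities $d_{k+1}$. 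To couple the two chains so that the comparison chain fails whenever the real one does, you must compare the real failure probability at step $j$ (at most $d_{R_j+1}$, where $R_j$ is the real agreement length) with the comparison chain's failure probability $d_{k+1}$, where $k$ is the comparison chain's state. Once the two chains have decoupled you only know $R_j\geq k$, so the required inequality $d_{R_j+1}\leq d_{k+1}$ holds only if $\{d_n\}$ is non-increasing. This is precisely the point of Remark (1) following Proposition \ref{prop:gm}: the minorisation forces one to replace $d_n$ by $\bar{d}_n=\sup_{i\geq n}d_i$, after which your argument proves the theorem under the a priori stronger hypothesis $\sum_{n}\prod_{i=1}^n(1-\bar{d}_i)=\infty$ rather than the stated one. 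Correcting this same oversight in \cite{JOP} is one of the main purposes of the paper, so the step cannot be waved through; the theorem as stated must be obtained either by a domination-free renewal argument (as in \cite{H}) or accepted in the weakened form.

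Two smaller points. The Renewal Theorem in the form ``renewal mass tends to $1/\mathbb{E}[\tau]$'' requires aperiodicity; if $\{n:d_n\neq0\}$ lies in a proper arithmetic progression you must argue as in the proof of Proposition \ref{prop:gm} (the integer $m$ there). Also watch the indexing at a reset: immediately after a failure the two futures have leading agreement $0$ and the relevant bound is $d_1$ (whose defining supremum imposes no constraint beyond the prepended symbol), so your $d_{R_j}$ should be $d_{R_j+1}$; this is what makes the inter-arrival tail come out as $\prod_{i=1}^n(1-d_i)$, matching the hypothesis. The reduction of the Bernoulli property to the coupling estimate, and the passage from vanishing oscillation of $\L_g^nf$ to a unique $g$-chain, are fine and agree with Section 3 of the paper.
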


The validity of (\hyperlink{hyp2}{2}) as a hypothesis follows from the above theorem.
To see this, note that
$$
\inf_{s\in S^{\{0\}}}\frac{g(sx_\N)}{g(sy_\N)}
=\left(\sup_{s\in S^{\{0\}}}\frac{g(sy_\N)}{g(sx_\N)}\right)^{-1},
$$
and so if $x_i=y_i$ for $1\leq i\leq n$,
\begin{equation*}
\left|1-\frac{g(sx_\N)}{g(sy_\N)}\right|\leq
\begin{cases}
1-\rho_n^{-1},&\text{if }g(sx_\N)\leq g(sy_\N),\\
\rho_n-1,&\text{otherwise},
\end{cases}
\end{equation*}
where $\rho_n=\rho_{[0,n]}(g)$.
Since $1-\rho_n^{-1}\leq\rho_n-1$,
\begin{align*}
\sum_{s\in S^{\{0\}}}|g(sx_\N)-g(sy_\N)|
&= \sum_{s\in S^{\{0\}}}g(sy_\N)\left|1-\frac{g(sx_\N)}{g(sy_\N)}\right|\\
&\leq\rho_n-1.
\end{align*}
Moreover, if $g$ is positive, $d_i$ is uniformly bounded below 1, and so,
\begin{align*}
\prod_{i=k}^n(1-d_i)
&=\exp\left(\sum_{i=k}^n\log(1-d_i)\right)\\
&\geq\exp\left(\sum_{i=k}^n(-d_i-Cd_i^2)\right)\\
&=\exp\left(\sum_{i=k-1}^{n-1}
\left(-\tfrac{1}{2}(e^{\log\rho_i}-1)-C(e^{\log\rho_i}-1)^2\right)\right)\\
&\geq\exp\left(-\half\sum_{i=k-1}^{n-1}(\log\rho_i+K(\log\rho_i)^2)\right),
\end{align*}
for some constants $C$, $K$.
Given $\varepsilon>0$, we can choose $k$ so that
$K\log\rho_i<\varepsilon$ for all $i\geq k-1$,
in which case,
$$
\prod_{i=k}^n(1-d_i)\geq\exp\left(-\left(\tfrac{1}{2}+\varepsilon\right)
\sum_{i=k-1}^{n-1}\log\rho_i\right)
=\prod_{i=k-1}^{n-1}\rho_i^{-(\frac{1}{2}+\varepsilon)}.
$$
Thus,
$$
\sum_{n=0}^\infty\prod_{i=0}^n\rho_{[0,i]}(g)^{-(\frac{1}{2}+\varepsilon)}=\infty
\quad\Longrightarrow\quad\sum_{n=1}^\infty\prod_{i=1}^n(1-d_i)=\infty.
$$

\section{\texorpdfstring{$g$}{g}-measures and couplings}\label{sect:(3)}

In this section, we consider the block-couplings described in \cite{JOP}.
We adopt the convention that $[m,n]=\emptyset$ if $m>n$,
$[\zeta]_\emptyset$ and
empty intersections are the full space,
empty products are 1, and empty sums are 0.
If $s\in S$, then $s^\Lambda\in S^\Lambda$ is such that $s^\Lambda_i=s$
for all $i\in\Lambda$
($s^\emptyset$ is the empty sequence).
We identify $\prod_{i=1}^nS_i^\Z$
with the sequence space $(\prod_{i=1}^nS_i)^\Z$ in the obvious way,
and extend previous definitions and notation accordingly;
in particular, $[\zeta_1,\ldots,\zeta_n]_K=\prod_{i=1}^n[\zeta_i]_K$
$(\zeta_i\in S_i^\Lambda,\,K\subseteq\Lambda\subseteq\Z)$.
Additionally, $\Delta_\Lambda=\{(x,y)\in X\times X\colon x_\Lambda=y_\Lambda\}$.

Each $g\in\G$ determines,
for all $m\leq n\in\Z$,
a set of probability measures
$\{\pi_{[m,n]}^g(\,\cdot\,|\,x)\,:\,x\in X\}$ on $(X,\B_{[m,n]})$ defined by
\begin{equation}\label{eq:pi}
\pi_{[m,n]}^g\bigl([x^\prime]_{[m,n]}\,\big|\,x\bigr)=\prod_{i=m}^n\g_i(T^ix^\prime)
\qquad(x^\prime\in[x]_{(n,\infty)}).
\end{equation}
The $\pi_{[m,n]}^g(\,\cdot\,|\,x)$ are consistent in the sense that
$$
\pi_{[m,n]}^g\bigl([x]_{[m,n]}\,\big|\,x\bigr)
=\pi_{[m,i]}^g\bigl([x]_{[m,i]}\,\big|\,x\bigr)
\pi_{(i,n]}^g\bigl([x]_{(i,n]}\,\big|\,x\bigr)
\qquad(m\leq i<n),
$$
and measurable in the sense that $\pi_{[m,n]}^g(f\,|\,x)$ is measurable in $x$
for all $\B_{[m,n]}$-measurable $f$.
Thus, they can be extended to a measurable set
$\{\pi_{\Z^-}^g(\,\cdot\,|\,x)\,:\,x\in X\}$
of probability measures on $(X,\B_{\Z^-})$.
Note that
$$
\mu\bigl([x]_{[m,n]}\,\big|\,\B_{(n,\infty)}\bigr)(x)
=\pi_{[m,n]}^g\bigl([x]_{[m,n]}\,\big|\,x\bigr)
\qquad\text{a.e.}(\mu)
$$
for any $g$-chain $\mu$.
Note also that \eqref{eq:pi} sets up a 1-1 correspondence between
such measurable sets of probability measures and elements of $\G$.

If $(M_i,{\mathcal A}_i,\mu_i)$, $1\leq i\leq n$, are probability spaces,
a coupling of the measures $\{\mu_i\}_{i=1}^n$
is a probability measure $P$ on
$\prod_{i=1}^n(M_i,{\mathcal A}_i)$
such that
\[
P=\mu_i\circ\psi_i\qquad(1\leq i\leq n),
\]
where $\psi _i\colon \prod_{j=1}^nM_j\to M_i$ is the natural projection.
(Note we assume the product is ordered in the same way the measures are listed.)
For $\mu\in M(X)$  and $\zeta\in S^\Lambda$ $(\Lambda\subset\Z)$
such that $\mu([\zeta])>0$,
let $\mu_\zeta$ denote the conditional probability measure
$\mu(\,\cdot\,|[\zeta])$.
If $\mu\in M(X)$ is $T$-invariant, the dynamical system $(T,\mu)$ is Bernoulli
if (and only if) the process determined by the partitions
$\{[s^{\{i\}}]\colon s\in S\}$, $i\in\Z$, is very weak Bernoulli,
that is, given $\varepsilon>0$, there exist $n\in\N$,
subsets $G_m\subseteq S^{[1,m]}$ $(m\in\N)$,
and couplings $P_\zeta$ of $\bigl\{\mu,\,\mu_\zeta\bigr\}$
on $\B_{[-n,0]}\times\B_{[-n,0]}$ $(\zeta\in G_m)$ such that
$\sum_{\zeta\in G_m}\mu([\zeta])>1-\varepsilon$
and
$$
n^{-1}\sum_{i=0}^{n-1}P_\zeta\bigl(\Delta_{\{-i\}}^c\bigr)<\varepsilon
\qquad(\zeta\in G_m,\,m\in\N)
$$
(see \cite{O}).

Let $g_1,g_2\in\G$. By a coupling of $g_1$ and $g_2$,
we mean a set of probability measures $\{P_{x,y}\,:\,x,y\in X\}$
such that $P_{x,y}$ is a coupling of
$\{\pi^{g_1}_{[-n,0]}(\,\cdot\,|\,x),\,\pi^{g_2}_{[-n,0]}(\,\cdot\,|\,y)\}$,
and $P_{x,y}\bigl([\zeta,\eta]\bigr)$ is $\B_\N\times\B_\N$-measurable in $(x,y)$ for all
$\zeta,\eta\in S^{[-n,0]}$, $n\geq1$.
Note that for such a coupling,
\begin{equation}\label{eq:coupa}
\begin{aligned}
\bigl|\L_{g_1}^n\1[\zeta](x_{\Z^+})-\L_{g_2}^n\1[\zeta](y_{\Z^+})\bigr|
&=\bigl|\pi_{[-n,0]}^{g_1}\bigl(T^n[\zeta]\,\big|\,x\bigr)
-\pi_{[-n,0]}^{g_2}\bigl(T^n[\zeta]\,\big|\,y\bigr)\bigr|\\
&\leq P_{x,y}\bigl(\Delta^c_{[-n,m-n]}\bigr)
\end{aligned}
\end{equation}
for all $\zeta\in S^{[0,m]},\ 0\leq m\leq n,\ x,y\in X$.

Let $g\in\G$, $\mu\in M(X)$ be a $g$-measure,
and $\{P_{x,y}\}$ be a coupling of $g$ with itself.
Then for each $n\geq0$ and $\zeta\in S^{[1,m]}$,
the measure $P_\zeta$ on $(X\times X,\B_{[-n,0]}\times\B_{[-n,0]})$ defined by
$$
P_\zeta\bigl([\eta,\xi]\bigr)
=\iint P_{x,y}\bigl([\eta,\xi]\bigr)
\,d\mu(x)d\mu_\zeta(y)\qquad(\eta,\xi\in S^{[-n,0]})
$$
is a coupling of $\{\mu,\mu_\zeta\}$.
Thus, to show $(T,\mu)$ is Bernoulli, it is enough to show that
for any $\varepsilon>0$
there is a coupling $\{P_{x,y}\,:\,x,y\in X\}$ of $g$ with itself such that
\begin{equation}\label{eq:bern}
\liminf_{n\to\infty}\sup_{x,y\in X}
n^{-1}\sum_{i=0}^{n-1}P_{x,y}\bigl(\Delta^c_{\{-i\}}\bigr)<\varepsilon.
\end{equation}

Given probability measures $\mu$ and $\nu$ on $(X,\B_{[m,n]})$,
a coupling $P$ of $\bigl\{\mu,\nu\bigr\}$ can be defined by
\begin{equation}
P([\zeta,\zeta])=\min\{\mu([\zeta]),\nu([\zeta])\},
\label{eq:coup1}
\end{equation}
and
\begin{equation}P([\zeta,\eta])
=\frac{\bigl(\mu([\zeta])-P([\zeta,\zeta])\bigr)
\bigl(\nu\bigl([\eta]\bigr)-P\bigl([\eta,\eta]\bigr)\bigr)}
{P\bigl(\Delta_{[m,n]}^c\bigr)}
\qquad\bigl((\zeta,\eta)\in\Delta_{[m,n]}^c\bigr),
\label{eq:coup2}
\end{equation}
where $\zeta,\eta\in S^{[m,n]}$.
Note that
\begin{equation}\label{eq:coup3}
P\bigl(\Delta^c_{[m,n]}\bigr)=\half\sum_{\zeta\in S^{[m,n]}}
\bigl|\mu([\zeta])-\nu([\zeta])\bigr|.
\end{equation}

\begin{remark}
The righthand side of \eqref{eq:coup2} is taken to be 0
if $P\bigl(\Delta^c_{[m,n]}\bigr)=0$,
that is, if $\mu=\nu$;
the precise form of $P$ on $\Delta_{[m,n]}^c$ is not important,
only that a coupling satisfying \eqref{eq:coup1},
and hence \eqref{eq:coup3}, exists,
and is continuous in $\mu$, $\nu$.
\end{remark}

Let $Z=\{0,1\}^\Z$ and $\tau$ be the shift on $Z$.
From hereon, if $B=\{b_n\}_{n=1}^\infty\subseteq\N$,
then $B_n$ denotes $\sum_{i=1}^nb_i$ $(n\geq1)$, and $B_0=0$.
For each such sequence $B$ and $K\in\N$ and $z\in Z$,
a sequence $\{I_n(z)\}_{n=0}^\infty$ of intervals in $\Z$ is assigned,
with $I_0(z)=\emptyset$, and for $n\geq1$,
$I_n(z)=(a_n,a_{n-1}]$ defined inductively by $a_0=0$ and
\begin{equation}\label{eq:gcoup1}
a_{n}=
\begin{cases}
a_{n-1}-b_{k},&\text{if }
z\in\bigcup_{i=1}^{k-1}[0^{I_{n-i}(z)}]\setminus[0^{I_{n-k}(z)}],
\ 2\leq k\leq \min\{n,K+1\},\\
a_{n-1}-b_1,&\text{otherwise}.
\end{cases}
\end{equation}

Given $K\in\N$ and sequences $B=\{b_n\}_{n=1}^\infty\subseteq\N$,
$\{d_n\}_{n=1}^\infty\subseteq\R^+$,
a probability measure $\nu$ on $(Z,\B_{\Z^-})$ is defined as follows.
For each $n\geq1$ and
$z\in\bigcap_{j=1}^{k}[0^{I_{n-j}(z)}]\setminus[0^{I_{n-k-1}(z)}]$,
$0\leq k\leq n-1$, let
\begin{equation}\label{eq:gam1}
\nu\Biggl([z]_{I_n(z)}\Biggm|\bigcap_{j=1}^{n-1}[z]_{I_j(z)}\Biggr)
=\begin{cases}
1-d_{k+1},&z\in[0^{I_n(z)}],\ k\leq K-1,\\
d_{k+1},&z\in[1^{I_n(z)}],\ k\leq K-1,\\
1,&z\in[1^{I_n(z)}],\ k\geq K,\\
0,&\text{otherwise}.
\end{cases}
\end{equation}
(Recall that, for $n=1$, the empty intersection is $Z$.)
This determines $\nu$ on the partitions
$$
\left\{\bigcap_{k=1}^n[z]_{I_k(z)}\,:\,z\in Z\right\}
\qquad(n\geq1),
$$
and hence by extension, on $\B_{\Z^-}$.

\begin{lemma}\label{lem:nu}
Given $K\in\N$, and sequences $B=\{b_n\}_{n=1}^\infty\subseteq\N$ and
$\{d_n\}_{n=1}^\infty\subseteq\R^+$,
let $\nu$ be the measure defined by \eqref{eq:gcoup1}--\eqref{eq:gam1}.
Then
\begin{equation}\label{eq:lem}
\lim_{n\to\infty}\nu\bigl([1]^{\{-n\}}\bigr)
=
\frac{\sum_{k=1}^{K}b_{k}d_k\prod_{j=1}^{k-1}(1-d_j)+b_{K+1}\prod_{j=1}^{K}(1-d_j)}
{\sum_{k=1}^{K+1} b_{k}\prod_{j=1}^{k-1}(1-d_j)}.
\end{equation}
\end{lemma}

\begin{proof}
It follows from \eqref{eq:gcoup1}
that if $k\geq1$ and $z\notin[0^{I_{k}(z)}]$,
then $I_{k+i}(z)=I_i(\tau^{a_k}z)+a_k$ $(i\geq1)$,
where $a_k$ is the righthand endpoint of $I_{k+1}(z)$,
and hence from \eqref{eq:gam1} that
$$
\nu\Bigl([z]_{[-n,a_k]}\,\Big|\,[z]_{(a_k,0]}\Bigr)
=\nu\bigl([\tau^{a_k}z]_{[-n-a_k,0]}\bigr)
\qquad(n\geq-a_k).
$$
In particular, if $i=B_{k}$ for some $0\leq k\leq K$, then
\begin{equation}\label{eq:pr1}
\nu\Bigl([1^{\{-n\}}]\,\Big|\,[1^{\{-i\}}0^{(-i,0]}]\Bigr)
=\begin{cases}
1,&i<n<B_{k+1},\\
\noalign{\vskip 4pt}
\nu\bigl([1^{\{B_{k+1}-n\}}]\bigr),& n\geq B_{k+1},
\end{cases}
\end{equation}
and
\begin{equation}\label{eq:pr2}
\nu\bigl([1^{\{-i\}}0^{(-i,0]}]\bigr)=
\begin{cases}
d_{k+1}\prod_{j=1}^{k}(1-d_j),& k\leq K-1,\\
\noalign{\vskip 8pt}
\prod_{j=1}^{K}(1-d_j),& k=K,
\end{cases}
\end{equation}
whereas if $i\neq B_k$ for any $0\leq k\leq K$, then
\begin{equation}\label{eq:pr3}
\nu\bigl([1^{\{-i\}}0^{(-i,0]}]\bigr)=0
\end{equation}
(note that in this case,  $1-i\leq0$).
It follows from \eqref{eq:pr1}--\eqref{eq:pr3} that for $B_k\leq n<B_{k+1}$, $k\geq0$,
\begin{align*}
\nu\bigl([1^{\{-n\}}]\bigr)
&=\sum_{i=0}^{n-1}
\nu\Bigl([1^{\{-n\}}]\,\Big|\,[1^{\{-i\}}0^{(-i,0]}]\Bigr)
\,\nu\bigl([1^{\{-i\}}0^{(-i,0]}]\bigr)\\
&\qquad+\nu\bigl([1^{\{-n\}}0^{(-n,0]}]\bigr)\\
&=\sum_{j=1}^{k}
\nu\bigl([1^{\{B_{j}-n\}}]\bigr)
\,\nu\bigl([1^{\{-B_{j-1}\}}0^{(-B_{j-1},0]}]\bigr)\\
&\qquad+\nu\bigl([1^{\{-B_k\}}0^{(-B_k,0]}]\bigr)
\end{align*}
(the cases $n=B_k$ and $n>B_k$ are different, but
lead to the same formula), and
\begin{equation*}
\nu\bigl([1^{\{-B_k\}}0^{(-B_k,0]}]\bigr)
=
\begin{cases}
d_{k+1}\prod_{i=1}^{k}(1-d_i),&0\leq k\leq K-1,\\
\noalign{\vskip 8pt}
\prod_{j=1}^{K}(1-d_j),&k=K,\\
\noalign{\vskip 8pt}
0,&k>K.
\end{cases}
\end{equation*}
Thus, if
\begin{equation*}
\alpha_i=
\begin{cases}
d_k\prod_{j=1}^{k-1}(1-d_j),&i=B_{k},\ 1\leq k\leq K,\\
\noalign{\vskip 8pt}
\prod_{j=1}^{K}(1-d_j),&i=B_{K+1},\\
\noalign{\vskip 4pt}
0,&\text{otherwise},
\end{cases}
\end{equation*}
and
\begin{equation}\label{eq:pr4}
\beta_n=
\begin{cases}
d_k\prod_{j=1}^{k-1}(1-d_j),& B_{k-1}\leq n<B_{k},\ 1\leq k\leq K,\\
\noalign{\vskip 8pt}
\prod_{j=1}^{K}(1-d_j),& B_K\leq n<B_{K+1},\\
\noalign{\vskip 4pt}
0,&\text{otherwise},\end{cases}
\end{equation}
then $\sum_{i=1}^\infty\alpha_i=1$,
and $\nu\bigl([1^{\{-n\}}]\bigr)$ satisfies the renewal equation
$$
\nu\bigl([1^{\{-n\}}]\bigr)
=\sum_{i=1}^n
\alpha_i\nu\bigl([1^{\{i-n\}}]\bigr)+\beta_n
\qquad(n\geq1),
$$
with $\nu\bigl([1^{\{0\}}]\bigr)=\beta_0$.
Let $m$ be the largest integer such that $B\subseteq m\N$,
or equivalently, such that $\{i\,:\,\alpha_i\neq0\}\subseteq m\N$.
Then
$\nu\bigl([1^{\{-mn\}}]\bigr)=\nu\bigl([1^{\{-mn-i\}}]\bigr)$
for all $0\leq i\leq m-1$, $n\geq0$,
and so it follows from the Renewal Theorem \cite[p330]{F} that
\begin{align*}
\lim_{n\to\infty}\nu\bigl([1^{\{-n\}}]\bigr)
&=\lim_{n\to\infty}\nu\bigl([1^{\{-mn\}}]\bigr)\\
&=\sum_{n=0}^\infty\beta_{mn}\left(\sum_{n=1}^\infty n\alpha_{mn}\right)^{-1}.
\end{align*}
Since $\beta_{mn+i}=\beta_{mn}$ for $0\leq i\leq m-1$,
$$
\sum_{n=0}^\infty\beta_{mn}
=m^{-1}\left(\sum_{k=1}^{K}b_{k}d_k\prod_{j=1}^{k-1}(1-d_j)
+b_{K+1}\prod_{j=1}^{K}(1-d_j)\right),
$$
and since $\alpha_{mn}=0$ unless $mn=B_k$ for some $0\leq k\leq K+1$,
\begin{align*}
\sum_{n=1}^\infty n\alpha_{mn}
&=\sum_{k=1}^{K}m^{-1}B_{k}d_k\prod_{j=1}^{k-1}(1-d_j)
+m^{-1}B_{K+1}\prod_{j=1}^{K}(1-d_j)\\
&=m^{-1}\left(\sum_{k=1}^{K+1} b_{k}\prod_{j=1}^{k-1}(1-d_j)\right).
\end{align*}
Thus, the result follows.
\end{proof}

Given $g\in\G$ and a sequence $B=\{b_n\}_{n=1}^\infty\subseteq\N$,
let
\begin{equation}\label{eq:dg}
d_n(g,B)=\sup\left\{
\half\sum_{\zeta\in S^{J_n}}
\bigl|\pi^g_{J_n}([\zeta]\,|\,x)-\pi^g_{J_n}([\zeta]\,|\,y)\bigr|
\,:\,(x,y)\in\Delta_{(-B_{n-1},0]}\right\},
\end{equation}
where $J_n=(-B_{n},-B_{n-1}]$ $(n\geq1)$,
and given $K\in\N$,
let $\nu$ be the  measure defined by \eqref{eq:gcoup1}--\eqref{eq:gam1}
with $d_n=d_n(g,B)$.
Couplings $P_{x,y}$
of $\bigl\{\pi^g_{\Z^-}(\,\cdot\,|\,x),\,\pi^g_{\Z^-}(\,\cdot\,|\,y),\,\nu\bigr\}$ $(x,y\in X)$
such that
\begin{equation}\label{eq:bc4}
P_{x,y}\bigl(\Delta^c_{\{-n\}}\times [0^{\{-n\}}]\bigr)=0\qquad(n\geq0),
\end{equation}
$P_{x,y}=P_{x^\prime,y^\prime}$ for $(x,y)_\N=(x^\prime,y^\prime)_\N$,
and $P_{x,y}\bigl([\zeta,\eta,\xi]\bigr)$ is continuous in $(x,y)$
for all $\zeta,\eta\in S^{[-n,0]},\,\xi\in\{0,1\}^{[-n,0]}$.
are defined as follows.

Let  $\F_n$ denote the sub-$\sigma$-algebra generated by the sets
$$
\left\{\bigcap_{k=1}^n[x,y,z]_{I_k(z)}\,:\,x,y\in X,\,z\in Z\right\}
\qquad(n\geq1)
$$
For an interval $I\subset\Z$, let $P_I(\,\cdot\,|\,x,y)$ denote the coupling of
$\bigl\{\pi^g_I(\,\cdot\,|\,x),\,\pi^g_I(\,\cdot\,|\,y)\bigr\}$ as determined by
\eqref{eq:coup1}--\eqref{eq:coup2},
and $\nu_I(\,\cdot\,|\,z)$ denote the restriction of
$\nu\bigl(\,\cdot\,\big|\,[z]_{(\max I,0]}\bigr)$ to $\B^Z_I$.
Note that if $(x,y)\in\bigcap_{j=1}^{k}\Delta_{I_{n-j}(z)}$
for some $z\in Z$, $0\leq k\leq n-1$,
then it follows from \eqref{eq:coup3} and \eqref{eq:dg} that
\begin{equation}\label{eq:dg2}
P_{I_n(z)}\bigl(\Delta_{I_n(z)}^c\bigm| x,y\bigr)\leq d_{k+1}.
\end{equation}
Fix $x^\prime,y^\prime\in X$, and suppose that $P_{x^\prime,y^\prime}$ has been
defined on  $\F_{n-1}$ for some $n\geq1$ so that
\begin{equation}\label{eq:bc1}
P_{x^\prime,y^\prime}\bigl(\Delta^c_{I_k(z)}\times [0^{I_k}]\bigr)=0
\end{equation}
for all $1\leq k\leq n-1$.
(When $n=1$, $\F_0$ is taken to be the trivial $\sigma$-algebra.)
Then it follows from \eqref{eq:dg2}, \eqref{eq:bc1} and \eqref{eq:gam1} that
\begin{equation}\label{eq:bc2}
P_{I_n(z)}\bigl(\Delta_{I_n(z)}^c\bigm| x,y\bigr)
\leq \nu_{I_n(z)}\big([1^{I_n(z)}]\bigm| z\bigr)
\qquad\text{a.e.}(P_{x^\prime,y^\prime}).
\end{equation}
Thus, for each $x,y\in X$ and $z\in Z$,
$P_{x^\prime,y^\prime}\bigl(\,\cdot\,\big|\,\bigcap_{k=1}^{n-1}[x,y,z]_{I_k(z)}\bigr)$
can be defined on $\B^X_{I_n(z)}\times\B^X_{I_n(z)}\times\B^Z_{I_n(z)}$ as a coupling of
$\bigl\{P_{I_n(z)}(\,\cdot\,| x,y),\,\nu_{I_n(z)}(\,\cdot\,|\,z)\bigr\}$
such that
\begin{equation}\label{eq:bc3}
P_{x^\prime,y^\prime}\Bigg(\Delta^c_{I_n(z)}\times[0^{I_n(z)}]\Biggm|
\bigcap_{k=1}^{n-1}[x,y,z]_{I_k(z)}\Bigg)=0
\qquad\text{a.e.}(P_{x^\prime,y^\prime}),
\end{equation}
and so that
$P_{x^\prime,y^\prime}\bigl([x,y,z]_{I_n(z)}\,\big|\,\bigcap_{k=1}^{n-1}[x,y,z]_{I_k(z)}\bigr)$
is continuous  in $(x^\prime,y^\prime)$
for all $x,y\in X$, $z\in Z$.
When $n=1$, \eqref{eq:bc2} holds and \eqref{eq:bc1} is trivial,
so it follows inductively that \eqref{eq:bc3} holds for all $n\geq1$.
This determines $P_{x,y}$ $(x,y\in X)$ on $\F_n$, and by extension as a coupling
of $\bigl\{\pi^g_{\Z^-}(\,\cdot\,|\,x),\,\pi^g_{\Z^-}(\,\cdot\,|\,y),\,\nu\bigr\}$,
and it follows from \eqref{eq:bc3} that \eqref{eq:bc4} holds.

The next result follows immediately from \eqref{eq:bc4} and Lemma \ref{lem:nu}.

\begin{proposition}\label{prop:gm}
Let $g\in\G$ be continuous, $B=\{b_n\}_{n=1}^\infty\subseteq\N$, $K\in\N$,
and $\{P_{x,y}\,:\,x,y\in X\}$ be coupling defined above by \eqref{eq:bc1}--\eqref{eq:bc3}.
Then
\begin{equation*}
\limsup_{n\to\infty}\sup_{x,y\in X}P_{x,y}\bigl(\Delta^c_{\{-n\}}\times Z\bigr)\\
\leq
\frac{\sum_{k=1}^{K}b_{k}d_k\prod_{j=1}^{k-1}(1-d_j)+b_{K+1}\prod_{j=1}^{K}(1-d_j)}
{\sum_{k=1}^{K+1} b_{k}\prod_{j=1}^{k-1}(1-d_j)},
\end{equation*}
where $d_n=d_n(g,B)$.
\end{proposition}

\begin{remarks}
In \cite{JOP}, the required asymptotic bound on the probability of two extensions
of a $g$-chain disagreeing at a coordinate is obtained by combining
the inequality \cite[(2.7)]{JOP} with \cite[Lemma 2.3]{JOP}, the aim of which is to show that
\begin{equation}\label{eq:jop}
\lim_{n\to\infty}Prob(Y_n\leq0)
\leq\nfrac{1pt}{1+\sum_{k=1}^Kb_k\exp(-\sum_{j=1}^{k-1}r_j)(1-e^{-r_k})}
{\sum_{k=1}^Kb_k\exp(-\sum_{j=1}^{k-1}r_j)},
\end{equation}
where $\{Y_n\}$ is the Markov chain  defined by \cite[(2.6)]{JOP}
in terms of sequences $B=\{b_n\}$ and $\{r_n\}$, and an arbitrary $K>0$.
The proof of \eqref{eq:jop} actually claims equality,
since $Prob(Y_n\leq0)$ satisfies a renewal equation
(see \cite[(3.5),(3.7),(3.8)]{JOP}),
and this is equivalent to calculating the limit of $\nu\bigl([1^{\{-n\}}]\bigr)$
as in Lemma \ref{lem:nu} above,
but with $d_k$ replaced by $1-e^{-r_k}$.
However, although in applications the sequences $\{b_n\}$ and $\{r_n\}$ would be
such that the righthand side of \eqref{eq:jop}
could be made arbitrarily small for large $K$,
the renewal equation holds for any choice of positive $r_n$;
these could be chosen so that $1-e^{-r_n}$ is uniformly close to 1,
in which case the righthand side of \eqref{eq:jop} could be greater than 1.

The difference between the bounds in \eqref{eq:jop} and \eqref{eq:lem} is
$$
\left|\nfrac{1pt}{1-(1-d_K)b_K\prod_{j=1}^{K-1}(1-d_j)}
{\sum_{k=1}^Kb_k\prod_{j=1}^{k-1}(1-d_j)}\right|
=\left|\nfrac{1pt}{1-e^{-r_K}b_K\exp(-\sum_{j=1}^{K-1}r_j)}
{\sum_{k=1}^Kb_k\exp(-\sum_{j=1}^{k-1}r_j)}\right|.
$$
This does not matter if the difference tends to 0 as $K\to\infty$,
but for small $r_n$ and $d_n$, \eqref{eq:jop} may be less than \eqref{eq:lem},
and so it could potentially be significant if, for example, $b_n$ grows exponentially as $n\to\infty$,
as is the case in the verification of hypotheses (\hyperlink{hyp1}{1}) and (\hyperlink{hyp3}{3}).
\end{remarks}

\section{The main results}

\begin{theorem}\label{thm:g}
Let $g\in\G$ be continuous.
If, given $\varepsilon>0$, there exists a sequence
$B=\{b_n\}_{n=1}^\infty\subseteq\N$
such that
\begin{equation}\label{eq:g}
\limsup_{n\to\infty}\frac
{\sum_{k=1}^nb_{k}d_k\prod_{j=1}^{k-1}(1-d_j)+b_{n+1}\prod_{j=1}^n(1-d_j)}
{\sum_{k=1}^{n+1} b_{k}\prod_{j=1}^{k-1}(1-d_j)}
<\varepsilon,
\end{equation}
where $d_n=d_n(g,B)$,
then there is a unique $g$-chain $\mu\in M(X)$, which is $T$-invariant
and Bernoulli.
\end{theorem}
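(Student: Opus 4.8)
The plan is to feed the single-coordinate estimate of Proposition \ref{prop:gm} into two criteria already established: the transfer-operator characterisation of a unique $g$-chain (Section \ref{sect:(2)}), and the reduction \eqref{eq:bern} of the Bernoulli property to very weak Bernoullicity. First I fix $\varepsilon>0$ and, using the hypothesis, choose $B=\{b_n\}_{n=1}^\infty\subseteq\N$ so that \eqref{eq:g} holds; let $\{P_{x,y}\}$ be the corresponding coupling of $g$ with itself. The $\limsup$ on the right of \eqref{eq:prop} is exactly the quantity that \eqref{eq:g} requires to be less than $\varepsilon$ (with $n$ in place of $K$), so Proposition \ref{prop:gm} gives
\begin{equation}\label{eq:star}
\limsup_{n\to\infty}\sup_{x,y\in X}P_{x,y}\bigl(\Delta^c_{\{-n\}}\bigr)<\varepsilon.
\end{equation}
Everything else is deduced from \eqref{eq:star}.

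For the Bernoulli property, recall that the Ces\`aro means of a bounded sequence have $\limsup$ no larger than that of the sequence. Hence \eqref{eq:star} gives
\[
\liminf_{n\to\infty}\sup_{x,y\in X}n^{-1}\sum_{i=0}^{n-1}
P_{x,y}\bigl(\Delta^c_{\{-i\}}\bigr)
\leq\limsup_{n\to\infty}\sup_{x,y\in X}P_{x,y}\bigl(\Delta^c_{\{-n\}}\bigr)<\varepsilon,
\]
which is precisely \eqref{eq:bern}. Since $g$ is continuous a $g$-measure $\mu$ exists, and the displayed inequality shows $(T,\mu)$ is Bernoulli.

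For uniqueness I use the characterisation from Section \ref{sect:(2)}: it is enough to show $\L_g^nf$ converges uniformly to a constant for every $f\in C(\X)$. Fix $\zeta\in S^{[0,m]}$ and apply \eqref{eq:coupa} with $g_1=g_2=g$; taking the supremum over $x,y\in X$ and bounding $\Delta^c_{[-n,m-n]}$ by the union of the $\Delta^c_{\{-i\}}$ over $n-m\leq i\leq n$,
\[
\var_\emptyset\bigl(\L_g^n\1[\zeta]\bigr)
\leq\sup_{x,y\in X}P_{x,y}\bigl(\Delta^c_{[-n,m-n]}\bigr)
\leq\sum_{i=n-m}^{n}\sup_{x,y\in X}P_{x,y}\bigl(\Delta^c_{\{-i\}}\bigr).
\]
For fixed $m$ the right-hand side has $\limsup$ at most $(m+1)\varepsilon$ by \eqref{eq:star}; the left-hand side does not depend on $B$, so, $\varepsilon$ being arbitrary, $\var_\emptyset(\L_g^n\1[\zeta])\to0$. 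Because $\L_g$ is positive and $\L_g\1=\1$, the quantities $\inf_x\L_g^n\1[\zeta](x)$ and $\sup_x\L_g^n\1[\zeta](x)$ are monotone in $n$ and bounded, hence convergent, and their difference is $\var_\emptyset(\L_g^n\1[\zeta])\to0$; thus $\L_g^n\1[\zeta]$ converges uniformly to a constant. Linearity covers functions of finitely many coordinates, and since $\|\L_g^n\|\leq1$ a density argument covers all $f\in C(\X)$. The $g$-chain is therefore unique, and being unique it must coincide with the $g$-measure $\mu$, so it is $T$-invariant.

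The substantive estimate is entirely contained in Proposition \ref{prop:gm}, so I expect the only real care to be needed in the last step: upgrading the decay of $\var_\emptyset(\L_g^n\1[\zeta])$ to genuine convergence of $\L_g^n\1[\zeta]$ to a constant. This does not follow from \eqref{eq:star} alone, but from the monotonicity of $\sup_x$ and $\inf_x$ under the Markov operator $\L_g$; the remaining inputs (the Ces\`aro inequality, the union bound over coordinates, and the density extension) are routine.
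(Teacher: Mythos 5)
Your proof is correct and takes essentially the same route as the paper: the paper's own argument is just the observation that \eqref{eq:g} together with Proposition \ref{prop:gm} gives $\limsup_{n\to\infty}\sup_{x,y}P_{x,y}(\Delta^c_{\{-n\}})\leq\varepsilon$, after which \eqref{eq:coupa} yields uniqueness of the $g$-chain and \eqref{eq:bern} yields the Bernoulli property. Your write-up simply makes explicit the details the paper leaves implicit (the Ces\`aro bound, the union bound over the coordinates of $[-n,m-n]$, and the monotonicity of $\sup_x\L_g^nf$ and $\inf_x\L_g^nf$), all of which are handled correctly.
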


\begin{proof}
Given $\varepsilon>0$,
we can choose $B=\{b_n\}_{n=1}^\infty$ so that
$$
\frac{\sum_{k=1}^{K}b_{k}d_k\prod_{j=1}^{k-1}(1-d_j)+b_{K+1}\prod_{j=1}^{K}(1-d_j)}
{\sum_{k=1}^{K+1} b_{k}\prod_{j=1}^{k-1}(1-d_j)}
<\varepsilon
$$
for some $K\in\N$.
Then if $\{P^\prime_{x,y}\,:\,x,y\in X\}$ is the coupling of $g$ with itself
defined by
$P^\prime_{x,y}=P_{x,y}\circ\psi^{-1}$ where
$\{P_{x,y}\,:\,x,y\in X\}$ is the coupling determined by \eqref{eq:bc1}--\eqref{eq:bc3},
and $\psi\colon X\times X\times Z\to X\times X$ is the natural projection,
the result follows from Proposition \ref{prop:gm} and \eqref{eq:bern}.
\end{proof}

\begin{remark}
Theorem \ref{thm:h} follows from Theorem \ref{thm:g} by taking $b_n=1$ for all $n$
(in which case the numerator in \eqref{eq:g} is 1).
\end{remark}

The proofs of the following lemmas are based on
the use of Hellinger integral estimates in \cite{JOP}.

\begin{lemma}\label{lem:rg1a}
Let $\Omega$ be a finite set, and $\mu,\nu\in M(\Omega)$
be such that $\mu(\omega),\nu(\omega)>0$ for all $\omega\in\Omega$.
Then
\begin{equation*}
\sum_{\omega\in\Omega}\sqrt{\mu(\omega)\nu(\omega)}
\geq 1-\tfrac{1}{2}\bigl(\sqrt{\rho}-1\bigr)^2,
\end{equation*}
where
$$
\rho=\sup_{\omega\in\Omega}\left\{\frac{\mu(\omega)}{\nu(\omega)},
\frac{\nu(\omega)}{\mu(\omega)}\right\}.
$$
\end{lemma}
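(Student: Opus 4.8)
The plan is to reduce the claimed inequality to an elementary pointwise estimate by way of the standard identity relating the Hellinger integral $\sum_\omega\sqrt{\mu(\omega)\nu(\omega)}$ to the squared $\ell^2$-distance between the vectors of square roots. Since $\mu$ and $\nu$ are probability measures, $\sum_{\omega\in\Omega}\mu(\omega)=\sum_{\omega\in\Omega}\nu(\omega)=1$, and expanding the square gives
$$
\sum_{\omega\in\Omega}\bigl(\sqrt{\mu(\omega)}-\sqrt{\nu(\omega)}\bigr)^2
=2-2\sum_{\omega\in\Omega}\sqrt{\mu(\omega)\nu(\omega)}.
$$
Hence the asserted bound is equivalent to
$\sum_{\omega\in\Omega}\bigl(\sqrt{\mu(\omega)}-\sqrt{\nu(\omega)}\bigr)^2\leq(\sqrt{\rho}-1)^2$, and this is the inequality I would actually establish.

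For the pointwise step, I would fix $\omega$ and write
$\bigl|\sqrt{\mu(\omega)}-\sqrt{\nu(\omega)}\bigr|=\sqrt{\nu(\omega)}\,\bigl|\sqrt{\mu(\omega)/\nu(\omega)}-1\bigr|$, which is legitimate because the positivity of $\nu$ makes the ratio well-defined and $\rho$ finite. By the definition of $\rho$, the quantity $t=\sqrt{\mu(\omega)/\nu(\omega)}$ lies in the interval $[\rho^{-1/2},\rho^{1/2}]$, and since $\rho\geq1$ one checks that $|t-1|\leq\sqrt{\rho}-1$ throughout this interval. Hence
$$
\bigl|\sqrt{\mu(\omega)}-\sqrt{\nu(\omega)}\bigr|\leq\bigl(\sqrt{\rho}-1\bigr)\sqrt{\nu(\omega)}
\qquad(\omega\in\Omega).
$$

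Finally I would square and sum, using $\sum_{\omega\in\Omega}\nu(\omega)=1$, to obtain
$$
\sum_{\omega\in\Omega}\bigl(\sqrt{\mu(\omega)}-\sqrt{\nu(\omega)}\bigr)^2
\leq\bigl(\sqrt{\rho}-1\bigr)^2\sum_{\omega\in\Omega}\nu(\omega)
=\bigl(\sqrt{\rho}-1\bigr)^2,
$$
which by the identity of the first paragraph is exactly the claim.

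There is no serious obstacle here; the argument is entirely elementary. The only point requiring a little care is the case distinction in the pointwise bound: one must verify that $|t-1|\leq\sqrt{\rho}-1$ holds on all of $[\rho^{-1/2},\rho^{1/2}]$, not merely at the upper endpoint. The upper endpoint gives $\sqrt{\rho}-1$ directly, while at the lower endpoint the needed inequality $1-\rho^{-1/2}\leq\sqrt{\rho}-1$ is equivalent to $\sqrt{\rho}+\rho^{-1/2}\geq2$, which follows from $\sqrt{\rho}\geq1$ (or from the arithmetic–geometric mean inequality). One could equally symmetrize by bounding in terms of $\sqrt{\mu(\omega)}$ rather than $\sqrt{\nu(\omega)}$, but this is not needed for the stated conclusion.
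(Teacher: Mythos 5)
Your proof is correct and is essentially the paper's own argument: the identity $2\sqrt{\mu(\omega)\nu(\omega)}=\mu(\omega)+\nu(\omega)-\nu(\omega)\bigl(1-\sqrt{\mu(\omega)/\nu(\omega)}\bigr)^2$ used in the paper is exactly your Hellinger-distance expansion, and the pointwise bound $|t-1|\leq\sqrt{\rho}-1$ via $1-\rho^{-1/2}\leq\sqrt{\rho}-1$ is the same case analysis. No substantive difference.
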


\begin{proof}
For any $\omega\in\Omega$,
\begin{equation*}
2\sqrt{\mu(\omega)\nu(\omega)}
=\mu(\omega)+\nu(\omega)-\nu(\omega)
\left(1-\sqrt{\frac{\mu(\omega)}{\nu(\omega)}}\right)^2.
\end{equation*}
Note that
$$
\inf_{\omega\in\Omega}\frac{\mu(\omega)}{\nu(\omega)}\geq\rho^{-1},
$$
and so,
\begin{equation*}
\left|\,1-\sqrt{\frac{\mu(\omega)}{\nu(\omega)}}\,\right|\leq
\begin{cases}
1-\sqrt{\rho^{-1}},&\nu(\omega)\geq\mu(\omega),\\
\sqrt{\rho}-1,&\nu(\omega)\leq\mu(\omega).
\end{cases}
\end{equation*}
Since $1-\sqrt{\rho^{-1}}\leq\sqrt{\rho}-1$,
it follows that
$$
\sum_{\omega\in\Omega}\sqrt{\mu(\omega)\nu(\omega)}
\geq 1-\tfrac{1}{2}\bigl(\sqrt{\rho}-1\bigr)^2,
$$
as required.
\end{proof}

To simplify the notation in the following result,
we identify a finite sequence $\zeta$
with the corresponding cylinder set $[\zeta]$.

\begin{lemma}\label{lem:rg2}
Let $\mu,\,\nu\in M(X,\B_{[m,n]})$ for some $m\leq n$.
Then
$$
\half\sum_{\zeta\in S^{[m,n]}}
\bigl|\mu(\zeta)-\nu(\zeta)\bigr|
\leq\sqrt{1-\prod_{i=m}^n
\left(1-\tfrac{1}{2}\bigl(\sqrt{\rho_i}-1\bigr)^2\right)^2}
$$
provided $\rho_i\leq\bigl(1+\sqrt{2}\bigr)^2$,
where
$$
\rho_i=\sup\left\{\frac{\mu(s\,|\,\eta)}
{\nu(s\,|\,\eta)},\,
\frac{\nu(s\,|\,\eta)}
{\mu(s\,|\,\eta)}
\,\colon s\in S^{\{i\}},\,\eta\in S^{[i+1,n]}\right\}
\qquad(m\leq i\leq n).
$$
\end{lemma}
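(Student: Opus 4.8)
The plan is to pass through the Hellinger affinity $H(\mu,\nu)=\sum_{\zeta\in S^{[m,n]}}\sqrt{\mu(\zeta)\nu(\zeta)}$. First I would record the elementary bound relating total variation to the affinity: writing $|\mu(\zeta)-\nu(\zeta)|=|\sqrt{\mu(\zeta)}-\sqrt{\nu(\zeta)}|\cdot|\sqrt{\mu(\zeta)}+\sqrt{\nu(\zeta)}|$ and applying the Cauchy--Schwarz inequality, together with the identities $\sum_\zeta(\sqrt{\mu(\zeta)}\mp\sqrt{\nu(\zeta)})^2=2(1\mp H(\mu,\nu))$, gives
$$
\half\sum_{\zeta\in S^{[m,n]}}|\mu(\zeta)-\nu(\zeta)|\leq\sqrt{1-H(\mu,\nu)^2}.
$$
It therefore suffices to prove the lower bound $H(\mu,\nu)\geq\prod_{i=m}^n c_i$, where $c_i=1-\half(\sqrt{\rho_i}-1)^2$, and then to observe that the hypothesis $\rho_i\leq(1+\sqrt2)^2$ is exactly the condition $c_i\geq0$, so that $\prod_i c_i\geq0$ and hence $H(\mu,\nu)^2\geq\prod_i c_i^2$; combining this with the displayed estimate yields the stated inequality.

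The lower bound on $H(\mu,\nu)$ would be obtained by a coordinate-by-coordinate tensorization. Using the chain rule $\mu(\zeta)=\prod_{i=m}^n\mu(\zeta_{\{i\}}\mid\zeta_{[i+1,n]})$ (and likewise for $\nu$), each summand of the affinity factors as $\prod_{i=m}^n\sqrt{\mu(\zeta_{\{i\}}\mid\zeta_{[i+1,n]})\nu(\zeta_{\{i\}}\mid\zeta_{[i+1,n]})}$. Summing out the innermost coordinate $m$ first, for each fixed $\zeta_{[m+1,n]}$ Lemma \ref{lem:rg1a} applied to the conditional measures on $S^{\{m\}}$ gives $\sum_{s\in S^{\{m\}}}\sqrt{\mu(s\mid\zeta_{[m+1,n]})\nu(s\mid\zeta_{[m+1,n]})}\geq c_m$; since this bound is uniform in $\zeta_{[m+1,n]}$, it factors out and leaves precisely the affinity of the marginals of $\mu,\nu$ on $\B_{[m+1,n]}$. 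Iterating (the conditional ratios $\rho_i$ for $i\geq m+1$ are unchanged on passing to this marginal, since conditioning on $[i+1,n]$ is unaffected by marginalising out coordinate $m$) yields $H(\mu,\nu)\geq\prod_{i=m}^n c_i$.

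Two points need care. First, the ratio bound relevant at coordinate $i$ for a fixed tail $\eta$ is some $\rho_i^\eta\leq\rho_i$; replacing it by $\rho_i$ requires that $x\mapsto1-\half(\sqrt{x}-1)^2$ be decreasing for $x\geq1$, which it is, so each local estimate from Lemma \ref{lem:rg1a} is dominated by the uniform $c_i$. Second, and this is the main obstacle, one must verify that summing the innermost coordinate out of the chain-rule product genuinely returns the affinity of the marginals on $\B_{[m+1,n]}$ --- that is, that the tensorization is exact --- and that pulling out the uniform factor $c_m$ before performing the remaining sum does not disturb this factorization. Once the tensorization and the monotone replacement are in place, the estimates combine directly; the role of $\rho_i\leq(1+\sqrt2)^2$ is solely to keep each $c_i$ nonnegative so that the product bound survives the squaring step.
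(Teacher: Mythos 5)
Your proposal is correct and follows essentially the same route as the paper: the Cauchy--Schwarz reduction to the Hellinger affinity, followed by conditioning on $S^{[m+1,n]}$, applying Lemma \ref{lem:rg1a} to the conditional measures at the innermost coordinate, and iterating. Your explicit remarks on the monotonicity of $x\mapsto 1-\tfrac{1}{2}(\sqrt{x}-1)^2$ and on the role of $\rho_i\leq(1+\sqrt{2})^2$ in keeping each factor nonnegative are points the paper leaves implicit, and the ``tensorization'' step you flag as the main obstacle is exactly the one-line conditional factorisation the paper carries out.
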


\begin{proof}
An application of the Cauchy-Schwartz inequality shows that
\begin{align*}
\sum_{\zeta\in S^{[m,n]}}|\mu(\zeta)-\nu(\zeta)|
&=\sum_{\zeta\in S^{[m,n]}}
\left|\sqrt{\mu(\zeta)}-\sqrt{\nu(\zeta)}\right|
\left(\sqrt{\mu(\zeta)}+\sqrt{\nu(\zeta)}\right)\\
&\leq 2\,\sqrt{1-\left(\sum_{\zeta\in S^{[m,n]}}
\sqrt{\mu(\zeta)\nu(\zeta)}\right)^2}.
\end{align*}
It follows from Lemma \ref{lem:rg1a} that the result holds when $m=n$,
and applying this to the measures $\mu(\,\cdot\,|\,\eta)$,
$\nu(\,\cdot\,|\,\eta\bigr)$ ($\eta\in S^{[m+1,n]}$) when $m<n$ gives
\begin{align*}
\sum_{\zeta\in S^{[m,n]}}\sqrt{\mu(\zeta)\nu(\zeta)}
&=\sum_{\eta\in S^{[m+1,n]}}\sum_{s\in S^{\{m\}}}
\sqrt{\mu(s\,|\,\eta)\,\nu(s\,|\,\eta)
\,\mu(\eta)\,\nu(\eta)}\\
&\geq\left(1-\tfrac{1}{2}\bigl(\sqrt{\rho_m}-1\bigr)^2\right)
\sum_{\eta\in S^{[m+1,n]}}\sqrt{\,\mu(\eta)\nu(\eta)}.
\end{align*}
The result follows inductively.
\end{proof}

\begin{corollary}\label{cor:rg2}
Let $g\in\G$ be positive and continuous,
and $B=\{b_n\}_{n=1}^\infty\subseteq\N$.
Then for $n$ large enough,
$$
d_n(g,B)
\leq\sqrt{1-\prod_{i=B_{n-1}}^{B_{n}-1}
\left(1-\tfrac{1}{2}\bigl(\sqrt{\rho_i}-1\bigr)^2\right)^2},
$$
where $\rho_i=\rho_{[0,i]}(g)$.
\end{corollary}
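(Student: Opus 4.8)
The plan is to apply Lemma \ref{lem:rg2} to the pair of measures $\mu=\pi^g_{J_n}(\,\cdot\,|\,x)$ and $\nu=\pi^g_{J_n}(\,\cdot\,|\,y)$ on $(X,\B_{J_n})$, where $J_n=[1-B_n,-B_{n-1}]$ and $(x,y)\in\Delta_{[1-B_{n-1},0]}$ ranges over the pairs appearing in the supremum defining $d_n(g,B)$ in \eqref{eq:dg}. Since $g$ is positive, $\pi^g_{J_n}([\zeta]\,|\,x)=\prod_{i\in J_n}\g(T^i\cdot)>0$, so these measures satisfy the positivity hypothesis of the lemma, and by \eqref{eq:coup3} the quantity $\half\sum_{\zeta\in S^{J_n}}\bigl|\pi^g_{J_n}([\zeta]\,|\,x)-\pi^g_{J_n}([\zeta]\,|\,y)\bigr|$ is exactly the left-hand side of Lemma \ref{lem:rg2}. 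Thus a uniform bound over all admissible $(x,y)$ will give the desired bound on $d_n(g,B)$.

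First I would identify the local ratios $\rho_i$ of Lemma \ref{lem:rg2} for this pair. By consistency of the $\pi^g_{J_n}$ and \eqref{eq:pi}, the conditional probability $\mu(s\,|\,\eta)$ of the symbol $s$ at coordinate $i$ given the symbols $\eta$ on $[i+1,-B_{n-1}]$ equals $\g(T^iw)=g(w_{[i,\infty)})$, where $w$ agrees with $s\eta$ on $[i,-B_{n-1}]$ and with $x$ on $[1-B_{n-1},\infty)$; similarly for $\nu$ with $y$ in place of $x$. The two configurations entering $\mu(s\,|\,\eta)$ and $\nu(s\,|\,\eta)$ therefore agree on $[i,-B_{n-1}]$ (both equal $s\eta$ there) and on $[1-B_{n-1},0]$ (since $x$ and $y$ agree there); as these two blocks are contiguous, they agree on all of $[i,0]$. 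Shifting so that coordinate $i$ becomes $0$, they agree on the coordinates $[0,-i]$, so the lemma's $\rho_i$ (which is symmetric in $\mu,\nu$) is bounded by $\rho_{[0,-i]}(g)$.

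The remaining step is monotonicity together with the reindexing $j=-i$. The function $\rho\mapsto1-\half(\sqrt\rho-1)^2$ is decreasing for $\rho\geq1$, so $\rho_i\leq\rho_{[0,-i]}(g)$ gives $1-\half(\sqrt{\rho_i}-1)^2\geq1-\half(\sqrt{\rho_{[0,-i]}(g)}-1)^2\geq0$, the last inequality holding provided $\rho_{[0,-i]}(g)\leq(1+\sqrt2)^2$. Since $\log\rho_{[0,k]}(g)=\var_{[0,k]}(\log g)\to0$ as $k\to\infty$ (because $\log g$ is uniformly continuous on the compact space $\X$ when $g$ is positive and continuous) and $-i\geq B_{n-1}\geq n-1$ for $i\in J_n$, all the $\rho_{[0,-i]}(g)$ with $i\in J_n$ fall below $(1+\sqrt2)^2$ once $n$ is large enough; this is the role of the phrase ``for $n$ large enough'' and is precisely what lets one invoke Lemma \ref{lem:rg2}. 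Multiplying the squared factors, each lying in $[0,1]$, preserves the inequality, so $\prod_{i\in J_n}\bigl(1-\half(\sqrt{\rho_i}-1)^2\bigr)^2\geq\prod_{i\in J_n}\bigl(1-\half(\sqrt{\rho_{[0,-i]}(g)}-1)^2\bigr)^2$, and substituting $j=-i$ rewrites the right-hand product as $\prod_{j=B_{n-1}}^{B_n-1}\bigl(1-\half(\sqrt{\rho_j}-1)^2\bigr)^2$. A larger product makes $1-(\cdot)$ smaller, so after taking the (monotone) square root and then the supremum over $(x,y)$, Lemma \ref{lem:rg2} delivers the stated bound.

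I expect the main obstacle to be the bookkeeping of the second paragraph: reading off the conditional ratios of $\pi^g_{J_n}$ as values of $g$, and verifying that the agreement set of the two relevant configurations is exactly $[i,0]$. This hinges on the blocks $[i,-B_{n-1}]$ and $[1-B_{n-1},0]$ being contiguous, which is where the hypothesis $(x,y)\in\Delta_{[1-B_{n-1},0]}$ from \eqref{eq:dg} enters and pins down the product range $[B_{n-1},B_n-1]$.
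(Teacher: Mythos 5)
Your proposal is correct and follows exactly the paper's route: apply Lemma \ref{lem:rg2} to $\pi^g_{J_n}(\,\cdot\,|\,x)$ and $\pi^g_{J_n}(\,\cdot\,|\,y)$ for $(x,y)\in\Delta_{[1-B_{n-1},0]}$, noting that $\rho_{[0,k]}(g)\leq(1+\sqrt{2})^2$ for $k$ large enough. The paper leaves the bookkeeping (identifying the conditional ratios with values of $g$, the agreement on $[i,0]$, and the reindexing $j=-i$) implicit; you have supplied it correctly.
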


\begin{proof}
The result is obtained by applying Lemma \ref{lem:rg2} to the measures
$\pi^g_{J_n}(\,\cdot\,|\,x)$ and
$\pi^g_{J_n}(\,\cdot\,|\,y)$
for $(x,y)\in\Delta_{(-B_{n-1},0]}$ (see \eqref{eq:dg}),
noting that $\rho_{[0,n]}(g)\leq(1+\sqrt{2})^2$ for $n$ large enough.
\end{proof}

\begin{lemma}\label{lem:as1}
Given $1<\lambda<(\sqrt{2}+1)^2$, there exists $K>0$ such that
if $1\leq \rho_i\leq\lambda$ $(1\leq i\leq n)$, then
$$
\prod_{i=1}^n\left(1-\tfrac{1}{2}\bigl(\sqrt{\rho_i}-1\bigr)^2\right)^2
\geq 1-\sum_{i=1}^n\left(\frac{(\log \rho_i)^2}{4}+K(\log \rho_i)^3\right).
$$
\end{lemma}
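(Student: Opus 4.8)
The plan is to reduce the product estimate to an elementary bound on each factor by means of the Weierstrass-type inequality $\prod_{i=1}^n(1-b_i)\ge 1-\sum_{i=1}^n b_i$, which holds whenever $0\le b_i\le 1$ (a one-line induction). Writing $r_i=\log\rho_i\ge0$ and setting
$$
b_i=1-\left(1-\tfrac{1}{2}\bigl(\sqrt{\rho_i}-1\bigr)^2\right)^2,
$$
I would first check that these $b_i$ are admissible. For $1\le\rho_i\le\lambda<(\sqrt{2}+1)^2$ the factor $1-\tfrac12(\sqrt{\rho_i}-1)^2$ lies in $(0,1]$: it equals $1$ at $\rho_i=1$, is decreasing in $\rho_i$ (since $(\sqrt{\rho}-1)^2$ is increasing for $\rho\ge1$), and remains positive because $\tfrac12(\sqrt{\lambda}-1)^2<\tfrac12(\sqrt{2})^2=1$. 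Hence its square lies in $(0,1]$ and $b_i\in[0,1)$. Applying the Weierstrass inequality then shows it suffices to establish the single-factor bound $b_i\le\tfrac{r_i^2}{4}+Kr_i^3$ for $0\le r_i\le\log\lambda$, with $K$ depending only on $\lambda$; summing over $i$ and recalling $r_i=\log\rho_i$ recovers exactly the right-hand side of the Lemma.

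For the single-factor bound I would express everything through $r=\log\rho\in[0,\log\lambda]$. Writing $\sqrt{\rho}=e^{r/2}$ and $\phi(r)=1-\tfrac12(e^{r/2}-1)^2$, the quantity to bound is $b(r)=1-\phi(r)^2$. Expanding $e^{r/2}-1=\tfrac{r}{2}+\tfrac{r^2}{8}+O(r^3)$ gives $\tfrac12(e^{r/2}-1)^2=\tfrac{r^2}{8}+\tfrac{r^3}{16}+O(r^4)$, so $\phi(r)=1-\tfrac{r^2}{8}-\tfrac{r^3}{16}+O(r^4)$ and
$$
b(r)=1-\phi(r)^2=\frac{r^2}{4}+\frac{r^3}{8}+O(r^4).
$$
Thus $b(r)-\tfrac{r^2}{4}=\tfrac{r^3}{8}+O(r^4)$, which is of order $r^3$ with \emph{positive} leading coefficient near $r=0$. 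This is worth emphasizing: the cubic correction cannot be discarded, so the $+Kr^3$ term in the statement is genuinely needed and the sign works in our favour only because $r\ge0$ (i.e.\ $\rho\ge1$).

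It then remains to absorb this correction uniformly. The function $r\mapsto(b(r)-\tfrac{r^2}{4})/r^3$ is continuous on $(0,\log\lambda]$ and, by the expansion above, extends continuously to $r=0$ with value $\tfrac18$; being continuous on the compact interval $[0,\log\lambda]$, it is bounded above, say by $K$, and this yields $b(r)\le\tfrac{r^2}{4}+Kr^3$ on $[0,\log\lambda]$ with $K$ depending only on $\lambda$. I expect the only delicate points to be structural rather than computational: the hypothesis $\rho_i\ge1$ is what makes $r_i\ge0$ and gives the cubic term the correct sign, while the restriction $\lambda<(\sqrt{2}+1)^2$ is precisely what keeps each factor positive and bounded away from $0$, so that $b_i\in[0,1)$ and the Weierstrass step is valid and the remainder is uniformly controlled on a compact $r$-interval. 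The main thing to get right is therefore the bookkeeping in the Taylor expansion (confirming the $\tfrac18\,r^3$ term) together with the compactness argument for the uniform constant $K$.
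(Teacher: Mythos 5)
Your proof is correct and follows essentially the same route as the paper's: both reduce the product estimate to a sum of single-factor deficits and bound each deficit by $\tfrac{1}{4}(\log\rho_i)^2+K(\log\rho_i)^3$ via a Taylor expansion in $\log\rho_i$, with the hypothesis $\lambda<(\sqrt{2}+1)^2$ serving exactly to keep each factor in $(0,1]$. The only (harmless) differences are that you pass from the product to the sum by the Weierstrass inequality $\prod_i(1-b_i)\ge 1-\sum_i b_i$ where the paper goes through $\exp\bigl(2\sum_i\log(1-u_i/2)\bigr)$ and $e^{-x}\ge 1-x$, and you obtain the uniform constant $K$ by a compactness argument where the paper tracks explicit constants $A_1,A_2,A_3$.
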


\begin{proof}
Given $\lambda$ and $\rho_i$ as above, let
$$
u_i=(\sqrt{\rho_i}-1)^2\qquad(1\leq i\leq n).
$$
Then $u_i\leq(\sqrt{\lambda}-1)^2<2$, and so,
\begin{equation}\label{eq:as2}
\begin{aligned}
\prod_{i=1}^n\left(1-\tfrac{1}{2}\bigl(\sqrt{\rho_i}-1\bigr)^2\right)^2
&=\exp\left(2\sum_{i=1}^n\log\left(1-\frac{u_i}{2}\right)\right)\\
&\geq\exp\left(-\sum_{i=1}^n
\bigl(u_i+A_1u_i^2\bigr)\right)\\
&\geq 1-\sum_{i=1}^n\left(u_i+A_2u_i^2\right),
\end{aligned}
\end{equation}
for some $A_1>0$ depending only on $\lambda$,
and $A_2>0$ depending only on $A_1$ and $\lambda$.
Since $\rho_i\leq\lambda$,
\begin{equation}\label{eq:as1}
\begin{aligned}
u_i&=1-2\exp\left(\tfrac{1}{2}\log \rho_i\right)
+\exp\left(\log \rho_i\right)\\
&\leq\frac{(\log \rho_i)^2}{4}+A_3(\log \rho_i)^3,
\end{aligned}
\end{equation}
for some $A_3>0$ depending only on $\lambda$.
Combining \eqref{eq:as1} and \eqref{eq:as2} gives
$$
\prod_{i=1}^n\left(1-\tfrac{1}{2}\bigl(\sqrt{\rho_i}-1\bigr)^2\right)^2
\geq 1-\sum_{i=1}^n\left(\frac{(\log \rho_i)^2}{4}+K(\log \rho_i)^3\right)
$$
for some $K>0$ depending only on $A_1,A_2, A_3$ and $\lambda$,
and hence, ultimately only on $\lambda$, as required.
\end{proof}

\begin{corollary}\label{cor:as}
Let $g\in\G$ be positive and continuous,
and $B=\{b_n\}_{n=1}^\infty\subseteq\N$.
If $\rho_{[0,B_N]}(g)<(\sqrt{2}+1)^2$, there exists $K>0$ such that
for all $n\geq N$,
$$
d_n(g,B)
\leq\sqrt{\sum_{i=B_n}^{B_{n+1}-1}
\left(\frac{(\log \rho_i)^2}{4}+K(\log \rho_i)^3\right)},
$$
where $\rho_i=\rho_{[0,i]}(g)$.
\end{corollary}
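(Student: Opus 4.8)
The plan is to derive the bound by substituting Lemma~\ref{lem:as1} into Corollary~\ref{cor:rg2}. Corollary~\ref{cor:rg2} already controls the coupling coefficient by
\[
d_n(g,B)\leq\sqrt{\,1-\prod_i\Bigl(1-\tfrac12\bigl(\sqrt{\rho_i}-1\bigr)^2\Bigr)^2\,},
\]
where the product runs over the block of indices $i$ coming from $J_n$ and $\rho_i=\rho_{[0,i]}(g)$; Lemma~\ref{lem:as1} bounds precisely such a product from below by $1-\sum_i\bigl(\tfrac14(\log\rho_i)^2+K(\log\rho_i)^3\bigr)$. Once the hypotheses of Lemma~\ref{lem:as1} are checked, combining the two and using that $t\mapsto\sqrt t$ is increasing immediately yields
\[
d_n(g,B)\leq\sqrt{\sum_i\left(\frac{(\log\rho_i)^2}{4}+K(\log\rho_i)^3\right)},
\]
as required.

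The one point demanding care is that the constant $K$ be chosen independently of $n$. Lemma~\ref{lem:as1} produces a $K$ depending only on the bound $\lambda$ imposed on the $\rho_i$, so I need a single $\lambda<(\sqrt2+1)^2$ dominating every $\rho_{[0,i]}(g)$ that can occur for $n\geq N$. Here I would use the monotonicity of the variation: if $i\leq j$ then $x_{[0,j]}=y_{[0,j]}$ is a stronger restriction than $x_{[0,i]}=y_{[0,i]}$, so $\var_{[0,i]}(\log g)$ is non-increasing in $i$, and hence so is $\rho_{[0,i]}(g)=\exp\bigl(\var_{[0,i]}(\log g)\bigr)$. Since the indices $i$ occurring for $n\geq N$ all satisfy $i\geq B_N$, the hypothesis $\rho_{[0,B_N]}(g)<(\sqrt2+1)^2$ gives $1\leq\rho_{[0,i]}(g)\leq\rho_{[0,B_N]}(g)<(\sqrt2+1)^2$ for every such $i$. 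I would therefore fix $\lambda=\rho_{[0,B_N]}(g)$ (or any value strictly between it and $(\sqrt2+1)^2$, the case $\rho_{[0,B_N]}(g)=1$ being trivial) and take the corresponding $K=K(\lambda)$ from Lemma~\ref{lem:as1}.

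With $\lambda$ and $K$ fixed, I would assemble the argument as follows. The same uniform bound $\rho_{[0,i]}(g)\leq\lambda<(1+\sqrt2)^2$ for $i\geq B_N$ is exactly what licenses the application of Lemma~\ref{lem:rg2} underlying Corollary~\ref{cor:rg2}, so $N$ may serve as the threshold replacing the phrase ``for $n$ large enough'' there. Applying Lemma~\ref{lem:as1} with this $\lambda$ and $K$ to the product in the first display gives
\[
1-\prod_i\Bigl(1-\tfrac12\bigl(\sqrt{\rho_i}-1\bigr)^2\Bigr)^2\leq\sum_i\left(\frac{(\log\rho_i)^2}{4}+K(\log\rho_i)^3\right),
\]
and taking square roots finishes the proof. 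The only genuine obstacle is the uniformity of $K$ in $n$, and this is resolved entirely by the monotonicity of $\rho_{[0,i]}(g)$; the remaining steps are direct substitutions.
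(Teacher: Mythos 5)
Your proposal is correct and follows the paper's own (one-line) argument: the paper likewise deduces the bound by feeding Lemma \ref{lem:as1} into Corollary \ref{cor:rg2}, with the observation that the $\rho_{[0,i]}(g)$ are non-increasing supplying a single $\lambda<(\sqrt{2}+1)^2$ valid for all $n\geq N$ and hence a uniform $K$. The only bookkeeping point, which affects the paper's printed statement as much as your write-up, is that the indices delivered by Corollary \ref{cor:rg2} (i.e.\ by the block $J_n$) run over $B_{n-1}\leq i\leq B_n-1$ rather than the range $B_n\leq i\leq B_{n+1}-1$ appearing in the corollary's display.
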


\begin{proof}
Since the $\rho_i$ are decreasing,
the result follows from Corollary \ref{cor:rg2}.
\end{proof}

\begin{theorem}\label{thm:c}
Let $g\in\G$ be positive and continuous.
If
\begin{equation}\label{eq:hyp5}
\lim_{n\to\infty}\sum_{i=\lceil\lambda^{n-1}\rceil}^{\lceil\lambda^n\rceil}
(\log \rho_{[0,i]}(g))^2=0
\end{equation}
for some $\lambda>1$,
then there is a unique $g$-chain $\mu\in M(X)$, which is $T$-invariant and Bernoulli.
\end{theorem}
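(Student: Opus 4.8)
The plan is to deduce Theorem \ref{thm:c} from Theorem \ref{thm:g} by constructing, for each $\varepsilon>0$, a sequence $B$ for which the left-hand side of \eqref{eq:g} is smaller than $\varepsilon$. The essential observation is that hypothesis \eqref{eq:hyp5} for a single $\lambda>1$ already controls the variation over \emph{every} window of bounded multiplicative length: writing $v_i=\log\rho_{[0,i]}(g)=\var_{[0,i]}(\log g)$, any interval $[a,b]$ with $b/a$ bounded is covered by a bounded number of consecutive blocks $[\lceil\lambda^{m-1}\rceil,\lceil\lambda^m\rceil]$, so \eqref{eq:hyp5} gives $\sum_{i=a}^b v_i^2\to0$ as $a\to\infty$, uniformly over such windows. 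In particular $v_i\to0$ (using that $v_i$ is nonincreasing), hence $\rho_{[0,i]}(g)\to1$ and $\sum_{i=a}^b v_i^3\to0$ as well. This flexibility is what lets me choose the growth rate of $B$ freely.

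Given $\varepsilon>0$, I would fix $\mu>1$ with $1-1/\mu<\varepsilon$ and take $B$ to be eventually geometric of ratio $\mu$, say $B_n=\lceil\mu^n\rceil$ for $n$ large (adjusting the finitely many initial $b_n$ to keep each $b_n\geq1$; this does not affect the $\limsup$ in \eqref{eq:g}). Then $b_n=B_n-B_{n-1}$ grows like $\mu^n$, $b_n/b_{n-1}\to\mu$, and the block $[B_n,B_{n+1})$ occurring in Corollary \ref{cor:as} has bounded multiplicative length. By the window estimate above, $\sum_{i=B_n}^{B_{n+1}-1}\bigl(\tfrac14 v_i^2+Kv_i^3\bigr)\to0$, so Corollary \ref{cor:as} gives $d_n(g,B)\to0$, and therefore $d_n:=\bar{d}_n(g,B)\to0$ as well.

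The heart of the matter is to estimate the ratio in \eqref{eq:g}. Writing $P_k=\prod_{j=1}^k(1-d_j)$ and using $d_k\prod_{j=1}^{k-1}(1-d_j)=P_{k-1}-P_k$, a summation by parts (with the convention $b_0=0$) rewrites the numerator of \eqref{eq:g} as $\sum_{k=1}^{n+1}(b_k-b_{k-1})P_{k-1}$, so that the whole ratio becomes
$$
\frac{\sum_{k=1}^{n+1}(b_k-b_{k-1})P_{k-1}}{\sum_{k=1}^{n+1}b_k P_{k-1}}.
$$
Since $b_k/b_{k-1}\to\mu$, I would fix $\eta$ with $1-1/\mu<\eta<\varepsilon$ and $N$ so that $0\le b_k-b_{k-1}\le\eta\,b_k$ for $k>N$; bounding the first $N$ numerator terms by a constant $C_N$ then gives ratio $\le C_N/\sum_{k\le n+1}b_kP_{k-1}+\eta$. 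Finally, because $b_k$ grows geometrically while $d_k\to0$ forces $\sum_{j<k}d_j=o(k)$, each term satisfies $b_kP_{k-1}=\exp\bigl(k\log\mu-o(k)\bigr)\to\infty$, so the denominator diverges and the constant term vanishes; hence the $\limsup$ of the ratio is at most $\eta<\varepsilon$. This verifies \eqref{eq:g}, and Theorem \ref{thm:g} yields the conclusion.

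The main obstacle is precisely this last step. With $d_n\to0$ it is tempting to take $B$ geometric of the given ratio $\lambda$, but the term $b_{n+1}\prod_{j=1}^n(1-d_j)$ in the numerator then contributes a residue of order $1-1/\lambda$, which need not be small. The resolution is to decouple the growth rate of $B$ from $\lambda$: the window-uniformity extracted from \eqref{eq:hyp5} permits a slowly growing $B$ (ratio $\mu$ arbitrarily close to $1$), and the summation-by-parts identity shows the ratio is then governed by $1-1/\mu$. The two points I would check most carefully are the uniformity of the sums $\sum_{i=a}^b v_i^2$ over all $[a,b]$ with $b/a\le\mu$, and the divergence of $\sum_k b_kP_{k-1}$, since these are exactly what render the residual term harmless.
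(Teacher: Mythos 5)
Your proposal is correct and follows essentially the same route as the paper: both extract from \eqref{eq:hyp5} uniform control of $\sum(\log\rho_{[0,i]}(g))^2$ over all windows of bounded multiplicative length, take $B$ geometric with ratio close to $1$, apply Corollary \ref{cor:as} to get $d_n\to0$, and use the divergence of $\sum_k b_k\prod_{j=1}^{k-1}(1-d_j)$ to make the ratio in \eqref{eq:g} small. The only difference is cosmetic: your summation by parts packages the two numerator terms as $\sum_k(b_k-b_{k-1})P_{k-1}$, whereas the paper bounds the $d_k$-weighted sum (which vanishes as a weighted average of $d_k\to0$) and the tail term $b_{n+1}\prod_{j=1}^{n}(1-d_j)$ (by $b_{n+1}/B_{n+1}$) separately, arriving at the same residue of order $1-1/\mu$ that is killed by letting the ratio tend to $1$.
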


\begin{proof}
Suppose $\lambda>1$ satisfies \eqref{eq:hyp5}, and consider $l>1$ and $a>0$.
Choose $m\in\N$ so that
$l^{m/2}\geq\max\{\lambda,a^{-1}\}$,
in which case,
$al^{m(n-1)}\geq\lambda^{n-1}$ $(n\geq2)$.
Now choose $k\in\N$ so that
$\lambda^k\geq\max\{a,l/\lambda\}$.
Then
$$
al^{mn}=a(l/\lambda)^{mn}\lambda^{mn}
\leq\lambda^{k+kmn+mn}\leq\lambda^{cn},
$$
where $c=k+km+m$.
Thus, since $l^{m(n-1)}\leq l^{j-1}<l^j\leq l^{mn}$ for $m(n-1)+1\leq j\leq mn$,
and $\lim_{n\to\infty}\log \rho_{[0,n]}(g)=0$,
\begin{equation}\label{eq:l}
\begin{aligned}
\limsup_{n\to\infty}
\sum_{i=\lceil al^{n-1}\rceil}^{\lceil al^n\rceil}
(\log \rho_{[0,i]}(g))^2
&\leq\limsup_{n\to\infty}
\sum_{i=\lceil al^{m(n-1)}\rceil}^{\lceil al^{mn}\rceil}
(\log \rho_{[0,i]}(g))^2\\
&\leq\limsup_{n\to\infty}
\sum_{i=\lceil\lambda^{n-1}\rceil}^{\lceil\lambda^{cn}\rceil}
(\log \rho_{[0,i]}(g))^2\\
&=\limsup_{n\to\infty}\sum_{j=n}^{n+c-1}
\sum_{i=\lceil\lambda^{j-1}\rceil}^{\lceil\lambda^{j}\rceil-1}
(\log \rho_{[0,i]}(g))^2\\
&=0.
\end{aligned}
\end{equation}
In particular, if $B=\{b_n\}_{n=1}^\infty$ is defined by
$B_n=\lceil l^n/(l-1)\rceil$ $(n\geq1)$, then
$\lfloor l^n\rfloor\leq b_n\leq\lceil l^n\rceil$ for all $n\geq2$, and
$$
\lim_{n\to\infty}\sum_{i=B_{n-1}}^{B_{n}-1}(\log \rho_{[0,i]}(g))^2=0.
$$
Hence, if $d_n=d_n(g,B)$,
it follows from Corollary \ref{cor:as} that
$\lim_{n\to\infty}d_n=0$.
Thus, for $n$ large enough, $1-d_n\geq l^{-1}$,
and so there is a constant $\varepsilon>0$ such that
$$
b_k\prod_{j=1}^{k-1}(1-d_j)\geq\varepsilon
$$
for all $k\geq1$.
Therefore,
$\sum_{k=1}^\infty b_{k}\prod_{j=1}^{k-1}(1-d_j)=\infty$, and so,
$$
\lim_{n\to\infty}\frac
{\sum_{k=1}^nb_{k}d_k\prod_{j=1}^{k-1}(1-d_j)}
{\sum_{k=1}^{n+1} b_{k}\prod_{j=1}^{k-1}(1-d_j)}
=0.
$$
Moreover,
\begin{align*}
\limsup_{n\to\infty}\frac
{b_{n+1}\prod_{j=1}^n(1-d_j)}
{\sum_{k=1}^{n+1} b_{k}\prod_{j=1}^{k-1}(1-d_j)}
&\leq\limsup_{n\to\infty}\frac{b_{n+1}}{B_{n+1}}\\
&=l-1.
\end{align*}
Thus, since $l>1$ is arbitrary,
the result follows from Theorem \ref{thm:g}.
\end{proof}

\begin{remark}
A consequence of \eqref{eq:l} is that
if \eqref{eq:hyp5} holds for one $\lambda>1$, it holds for all $\lambda>1$.
\end{remark}

Consider the hypotheses (\hyperlink{hyp1}{1}) and (\hyperlink{hyp3}{3}).
Clearly, (1) implies \eqref{eq:hyp5}.
Moreover, for $\lambda>1$,
$$
\lim_{n\to\infty}\sum_{i=\lceil\lambda^{n-1}\rceil}^{\lceil\lambda^n\rceil}i^{-1}
=\log\lambda,
$$
and so if $\log \rho_{[0,n]}=o(n^{-\frac{1}{2}})$,
then \eqref{eq:hyp5} holds.
Thus, both (\hyperlink{hyp1}{1}) and (\hyperlink{hyp3}{3}) imply uniqueness and the Bernoulli property.

\end{document}